\documentclass[
final
, nomarks
]{dmtcs-episciences}

\usepackage[utf8]{inputenc}
\usepackage{subfigure}
\usepackage[english]{babel}
\usepackage{amssymb}
\usepackage{amsthm}
\usepackage{amsmath}
\usepackage{a4wide}
\usepackage{esvect}
\usepackage{hyperref}
\usepackage{verbatim}
\usepackage{color}
\usepackage{geometry}
\usepackage{framed}
\usepackage[colorinlistoftodos,prependcaption,textsize=tiny]{todonotes}

\newtheorem{thm}{Theorem}

\newtheorem{clm}[thm]{Claim}

\newcommand\cF{{\mathcal F}}
\newcommand\cG{{\mathcal G}}

\newcommand\cL{{\mathcal L}}

\newcommand\cP{{\mathcal P}}

\newcommand\cQ{{\mathcal Q}}

\newcommand{\ignore}[1]{}

\linespread{1.3}
\pagestyle{plain}

\author{
D\'aniel Gerbner
\and
M\'at\'e Vizer
}

\title{On non-adaptive majority problems of large query size}

\affiliation{Alfr\'ed R\'enyi Institute of Mathematics} 

\keywords{combinatorial search, non-adaptive, majority}

\received{2021-01-13}

\revised{2021-08-06} 

\accepted{2021-10-27}

\publicationdetails{23}{2021}{3}{15}{7084}
\begin{document}
\maketitle

\begin{abstract}

We are given $n$ balls and an unknown coloring of them with two colors. Our goal is to find a ball that belongs to the larger color class, or show that the color classes have the same size. We can ask sets of $k$ balls as queries, and the problem has different variants, according to what the answers to the queries can be. These questions has attracted several researchers, but the focus of most research was the adaptive version, where queries are decided sequentially, after learning the answer to the previous query. Here we study the non-adaptive version, where all the queries have to be asked at the same time.


\end{abstract}

\section{Introduction}

\indent

A widely studied problem in combinatorial search theory is the so-called \textit{Majority Problem}. We are given $n$ indexed balls - say the set $[n]=\{1,2,...,n\}$ - as an input, each colored in some way unknown to us with one of two colors. A ball $i \in [n]$ is called \textit{majority ball} if there are more than $\frac{n}{2}$ balls in the input set that have the same color as $i$. We would like to find a ball of the majority color or show that there is no majority color by asking subsets of $[n]$, that we call \textit{queries}. We would like to determine the minimum number of queries needed in the \textbf{worst case} with an optimal strategy if all the queries are fixed at the beginning. We call this the \textit{non-adaptive} version of the Majority Problem. 
If the queries may depend on the answers to the previous ones, that we call the \textit{adaptive} version of the Majority Problem.

We still need to describe what kind of queries one can use. Each query corresponds to a subset of size $k$ of the input set. There are different variants of this problem, according to what the answer to a query is.
More precisely, a \textit{model} of the Majority Problem is given by $[n]$, the number of colors, the size of the queries $k$, the possible answers and whether it is adaptive or non-adaptive. In this paper we deal only with two colors, in the non-adaptive case. Sometimes we look at the set of queries as a hypergraph $\mathcal{Q}$, with the queries being the (hyper)edges. We refer to balls also as vertices. 

\textbf{Models}

The most basic model is the \textit{pairing model}. In this model the size of a query is two, and the answer 
is YES, if the two balls have the same color and NO otherwise.

We note that the adaptive version of this problem, when the number of colors is not limited was investigated by Fisher and Salzberg \cite{FS1982}, who proved that $\lceil 3n/2 \rceil - 2$ queries are necessary and sufficient. If the number of colors is two, then Saks and Werman \cite{SW1991} proved that the minimum number of queries needed is $n-b(n)$, where $b(n)$ is the number of $1$'s in the diadic form of $n$. The non-adaptive version was studied in \cite{GKPP2013}.

In this paper we deal with generalizations of the pairing model, when we ask queries of larger size. The first model of this kind was introduced and investigated by De Marco, Kranakis and Wiener \cite{DKW2011}, then many related results appeared in the literature \cite{B2014,cgp,DK2015,EH2015,GKPPVW2015,glv}. However, most of them studied only the adaptive case.



The authors considered some of these models in \cite{GV2017}, and improved the existing bounds in the adaptive case. Here we investigate the non-adaptive versions. We remark that the first arXiv version of \cite{GV2017} contained a section on the non-adaptive case, thus most of our results. 
Following the suggestion of an anonymous referee, we removed that part from \cite{GV2017}, with the plan of publishing it separately. We also extended the results slightly.

\smallskip

\textbf{Hypergraph language}.

The queries can be considered as edges of a hypergraph, which we call \textit{query hypergraph} and usually denote by $\cQ$. 
We introduce some hypergraph properties, that we will use later.
A hypergraph has \textit{Property B} if its vertices can be colored with two colors such that there is no \textit{monochromatic} edge in the hypergraph, i.e.~ an edge with vertices of the same color. For $k\ge 1$, let us denote by $\mathsf{m}(k)$ the cardinality of the edge set of a smallest $k$-uniform hypergraph that does not have Property B. This parameter is widely studied, the best lower bound on $\mathsf{m}(k)$ we are aware of is $\Omega(2^k\sqrt{k/\log{k}})$ due to Radhakrishnan and Srinivasan \cite{RS2000}. For $n\ge 2k-1$ we also consider $\mathsf{m}(k,n)$, which is the cardinality of a smallest $k$-uniform hypergraph with $n$ vertices that does not have Property B. Obviously if $n$ is large enough, then we have $\mathsf{m}(k,n)=\mathsf{m}(k)$.

We also use a similar notion that we call \textit{Property $C$}. Let $k\ge 2$. We call a coloring of a $k$-set with two colors \textit{balanced} if the cardinality of the two color classes differ by at most one. We say that a hypergraph has Property C if its vertices can be colored with two colors such that every edge is balanced. Let us denote by $\mathsf{d}(k)$ the cardinality of the edge set of a smallest $k$-uniform hypergraph that does not have Property C. We also consider $\mathsf{d}(k,n)$, which is the cardinality of the edge set of a smallest $k$-uniform hypergraph with $n$ vertices that does not have Property C. Obviously, if $n$ is large enough, then we have $\mathsf{d}(k,n)=\mathsf{d}(k)$.


For odd $k$,
Eppstein and Hirschberg \cite{EH2015} proved $\mathsf{d}(k)\le k+3\log k+4$. If $k$ is even, this problem can also be formulated as we are looking for the smallest hypergraph with a positive discrepancy. If $k=2 \mod 4$, then it is easy to see that $\mathsf{d}(k)=3$.
Let $snd(i)$ be the smallest positive integer that does not divide $i$. Alon, Kleitman, Pomerance, Saks and Seymour \cite{AKPSS1987} and Cherkashin and Petrov \cite{CP2017} proved that there exist constants $c_1$ and $c_2$ such that if $k=0 \mod 4$, then
$$c_1\frac{\log snd(k/2)}{\log\log snd(k/2)}\le \mathsf{d}(k) \le c_2\log snd(k). $$

\textbf{Structure of the paper}.

The rest of the paper is organised as follows: in Section 2 we introduce the models and state the known results in the adaptive setting. In Section 3, 4 and 5 we state and prove our results regarding the different models.

\section{Models}

In this section we define the models we study, and state the results known in the adaptive case. Three of the models were introduced by De Marco and Kranakis \cite{DK2015}, and their (adaptive) bounds were improved by Eppstein and Hirschberg \cite{EH2015} and by the authors \cite{GV2017}. The last one was introduced by Borzyszkowski \cite{B2014}, who found the exact answer in the adaptive case. In each of these models, we are given $n$ indexed balls, colored with two colors, and we ask queries of size $k$ ($k \ge 2$) that we denote by $Q$. The models differ only in the possible answers to the queries, thus we emphasize what the answers can be. We give each model an abbreviation that we indicate after its name. If the abbreviation is $X$, then we will denote by $A(X,k,n)$ and $N(X,k,n)$ the number of queries needed to ask in the worst case of the adaptive/nonadaptive version of that model, respectively.



\subsection{Models, adaptive results}

$\bullet$ \texttt{Output (or Partition) Model = OM}:

\begin{framed}
\noindent$\mathbf{Answer:}$ $\{Q',Q''\}$, a partition of $Q$, where $Q'$ is the set of balls of one color and $Q''$ of the other color.
No indication is provided about which of the colors is in $Q'$.
\end{framed}

In the following theorem, the upper bound if $n$ is even is due to De Marco and Kranakis \cite{DK2015}, while the other bounds are due to Eppstein and Hirschberg \cite{EH2015}.

\begin{thm}[Eppstein, Hirschberg \cite{EH2015}, De Marco, Kranakis \cite{DK2015}] For all $2 \le k \le n$ we have

\begin{displaymath}
A(OM,k,n)\ge
\left\{ \begin{array}{l l}
\lceil \frac{n-1}{k-1} \rceil & \textrm{if $k$ is odd,}\\
\frac{n}{k-1} - O(n^{1/3}) & \textrm{if $k$ is even,}\\
\end{array}
\right.
\end{displaymath}

\begin{displaymath}
A(OM,k,n)\le
\left\{ \begin{array}{l l}
\lceil \frac{n-1}{k-1} \rceil & \textrm{if $n$ is even,}\\
\lceil \frac{n-2}{k-1} \rceil & \textrm{if $n$ is odd.}\\
\end{array}
\right.
\end{displaymath}

\end{thm}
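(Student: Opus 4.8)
The proof splits into the upper bound and the two lower bounds; I would use the following plan. Throughout, note that the information obtained from a set of answered queries is exactly the graph $G$ on $[n]$ in which two balls lying in a common query are joined by a ``same colour'' or ``different colour'' edge according to the block they occupy; on each connected component of $G$ the colouring is then determined up to a global swap of the two colours, while a ball in no query keeps a completely free colour. Hence the colourings consistent with the answers are obtained by choosing, independently, a swap on each component and a colour for each isolated ball, and a strategy succeeds precisely when all these colourings give the same answer (a common majority ball, or ``no majority'' for all of them).

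\textbf{Upper bound.} I would build a connected query hypergraph greedily: the first query is an arbitrary $k$-set, and each later query consists of one already-queried ``pivot'' together with $k-1$ fresh balls, so $q$ queries place $q(k-1)+1$ balls into a single component, whose two colour-class sizes $\{s,|V|-s\}$ we then know. For $n$ even take $q=\lceil(n-1)/(k-1)\rceil$, which covers all $n$ balls: output a ball of the larger class, or declare ``no majority'' if $s=n-s$. For $n$ odd take $q=\lceil(n-2)/(k-1)\rceil$, which covers $n-1$ balls and leaves one ball $z$ uncovered, with known split $\{s,n-1-s\}$ and $s\ge (n-1)/2$; if $s>(n-1)/2$ then that class already has at least $(n+1)/2>n/2$ balls and we output one of them, while if $s=(n-1)/2$ then whatever $z$'s colour is, its class has $(n-1)/2+1>n/2$ balls, so $z$ is a majority ball and we output $z$. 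This gives the two stated upper bounds.

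\textbf{Lower bound, $k$ odd.} Suppose at most $\lceil(n-1)/(k-1)\rceil-1$ queries are used. Each query reduces the number of components of the query hypergraph on $[n]$ by at most $k-1$, so at the end there are at least two components. The adversary colours each component as balanced as its parity allows (imbalance $0$ or $1$) and leaves isolated balls free; writing $o$ for the number of odd-sized components, the attainable sizes of colour class $1$ form the interval of $o+1$ integers centred at $n/2$, so as soon as $o\ge 2$ the consistent colourings include one with a strict majority together with either one that is a tie (when $n$ is even, so $o$ is even) or two with different majority colours (when $n$ is odd, so $o$ is odd and then $\ge 3$), and in each case the strategy can neither name a majority ball nor declare ``no majority''. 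The main obstacle is to rule out the configurations with $o\le 1$ (or $o=0$): this is where the parity of $k$ enters, since a chain of $k$-uniform queries with $k$ odd has odd size (each link adds the even number $k-1$), so reducing $o$ while still covering all the balls forces wasteful overlapping queries and pushes the query count back up to $\lceil(n-1)/(k-1)\rceil$; and there is the usual online issue that the adversary must maintain the set of consistent colourings and choose its answers before the final component structure is known.

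\textbf{Lower bound, $k$ even.} Here an evenly split query ($k/2$ versus $k/2$) conveys information without merging any components, so the clean connectivity bound is lost; instead I would run an amortised argument, tracking a potential that combines the number of components with the total imbalance and showing it decreases by at most $k-1$ per query apart from an overall saving of $O(n^{1/3})$, which yields $A(OM,k,n)\ge n/(k-1)-O(n^{1/3})$. Bounding this saving --- that is, showing the algorithm cannot profit from balanced answers by more than $O(n^{1/3})$ in total --- is the delicate point, and the $O(n^{1/3})$ term is an artefact of the accounting rather than something matched by the construction above.
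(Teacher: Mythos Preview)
The paper does not prove this theorem: it is quoted from \cite{EH2015} and \cite{DK2015} as background on the adaptive case, and no argument for it appears in the text. So there is no ``paper's own proof'' to compare your proposal to. What the paper \emph{does} prove is the non-adaptive analogue (Theorem~\ref{nom}), and your upper-bound strategy---build a connected query hypergraph by starting with one $k$-set and repeatedly adding a $k$-set that meets the current union in a single vertex, and for odd $n$ leave one ball out---is exactly the construction used there; since the construction is non-adaptive it a fortiori gives the adaptive upper bound. On that part you are in full agreement with the paper's (non-adaptive) proof.

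Your lower-bound sketches, on the other hand, are not yet proofs. For odd $k$ you correctly get at least two components, but then you need \emph{every} consistent completion to be ambiguous, and the near-balanced adversary you describe only forces ambiguity when enough components have odd size; you flag ``the main obstacle is to rule out the configurations with $o\le 1$'' and the online commitment issue, but you do not actually resolve either. (Your parity remark that a chain of odd-size queries grows by the even number $k-1$ controls tree-like components only; once queries overlap in more than one vertex a component can be even.) For even $k$ the proposal is just an outline: you posit a potential and an $O(n^{1/3})$ leak without saying what the potential is or why the leak is bounded. These are precisely the non-trivial parts of the Eppstein--Hirschberg argument, so as written the lower bounds remain gaps rather than proofs.
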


$\bullet$ \texttt{Counting Model = CM}:

\begin{framed}

\noindent$\mathbf{Answer:}$ a number $i \le k/2$ such that the query has exactly $i$ balls of one of the color classes (thus $k-i$ balls of the other color class).
\end{framed}

In the following result, the upper bound is due to Eppstein and Hirschberg \cite{EH2015} , while the lower bound is due to Gerbner and Vizer \cite{GV2017}.

\begin{thm}[Eppstein, Hirschberg \cite{EH2015},Gerbner, Vizer \cite{GV2017}] For all $2 \le k < n$ we have
$$ \frac{6n}{5k+6}-c(k)\le A(CM,k,n) \le \frac{n}{\lfloor \frac{k}{2} \rfloor}  + O(k),$$
where $c(k)$ depends only on $k$.
\end{thm}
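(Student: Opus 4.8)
I would use an adaptive strategy with two phases. In a preliminary phase I would spend $O(k)$ queries to produce a set $B^{*}$ of $\lceil k/2\rceil$ balls that are \emph{certified} to be monochromatic; call their colour \emph{red}. Concretely, take any $k+1$ balls $T$ (possible since $k<n$). If $k$ is even, $T$ cannot be perfectly balanced, and the $k+1$ queries $T\setminus\{j\}$, $j\in T$, pin down the colour of every $j$, since the reported count of $T\setminus\{j\}$ takes one value on the $A$-balls and a different value on the $B$-balls (or a single value, in which case $T$ is monochromatic). If $k$ is odd one must first locate an unbalanced $(k+1)$-subset; I would do this by sliding a window of $k+1$ consecutive balls, observing that a long run of perfectly balanced windows forces a monochromatic stretch, a contradiction once the stretch is long enough, so this costs only $O(k)$ more queries. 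Either way we learn the colouring of some $(k+1)$-set relative to itself, and its larger colour class (size $\ge\lceil k/2\rceil$) supplies $B^{*}$. (If $n$ is only a bounded multiple of $k$, one instead just classifies all balls directly in $O(k)$ queries.)

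\textbf{Upper bound, main phase.} Partition the remaining $n-\lceil k/2\rceil$ balls into blocks $D_{1},\dots,D_{m}$ of size $\lfloor k/2\rfloor$ (padding the last block with balls of already known colour), and ask $Q_{j}:=B^{*}\cup D_{j}$, which has size exactly $k$. Since $Q_{j}$ already contains $\lceil k/2\rceil\ge k/2$ red balls, red is a (weak) majority in $Q_{j}$, so its answer equals the number of non-red balls in $D_{j}$; hence one reads off $r_{j}$, the number of red balls in $D_{j}$. After these $m$ queries the total number of red balls is $\lceil k/2\rceil+\sum_{j} r_{j}$, which we compare with $n/2$: output a ball of $B^{*}$ if red is a majority, report ``no majority'' if the counts are equal, and otherwise output a non-red ball (one is visible inside the certified $(k+1)$-set unless that set is monochromatic, in which case $O(\log k)$ extra queries inside a block with $r_{j}<\lfloor k/2\rfloor$ isolate one). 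The total is $m+O(k)\le n/\lfloor k/2\rfloor+O(k)$.

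\textbf{Lower bound.} Here I would run an adversary argument. Encoding a colouring as $\sigma\in\{\pm1\}^{[n]}$, with ``majority'' meaning $|\sum_{v}\sigma(v)|\ge 2$ (odd $n$ is analogous and only affects the constant), the adversary answers each query so as to keep simultaneously consistent a balanced colouring $\sigma_{0}$ and an unbalanced one of the form $\sigma_{0}\oplus S$. The key observation is that $\sigma_{0}$ and $\sigma_{0}\oplus S$ give the same answer on a query $Q$ exactly when $S$ meets $Q$ in equally many $+1$'s and $-1$'s of $\sigma_{0}$, or $S$ exactly reverses the $\pm1$-split of $Q$; and $\sigma_{0}\oplus S$ is unbalanced precisely when $S$ has nonzero $\sigma_{0}$-weight. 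So the algorithm cannot stop until its queries and the received answers rule out every such ``hidden rebalancing'' set $S$, for every balanced $\sigma_{0}$ still consistent with those answers. I would then measure the adversary's ``flexibility'' (initially $\ge n-O(k)$) and set up a discharging/averaging estimate showing that a single query of size $k$, whatever answer the adversary supplies, costs at most $\tfrac{5k+6}{6}$ units of flexibility — the loss of a $\tfrac16$ fraction below the trivial $k$ coming from the query-configurations whose answer is still compatible with a local rebalancing of $Q$. Since the algorithm cannot terminate while flexibility is positive, this yields $q\cdot\tfrac{5k+6}{6}\ge n-c'(k)$, i.e.\ $q\ge\tfrac{6n}{5k+6}-c(k)$.

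\textbf{Main obstacle.} In the upper bound the only delicate point is the preliminary phase: certifying a monochromatic $\lceil k/2\rceil$-set in $O(k)$ queries, dodging the odd-$k$ degeneracy in which $(k+1)$-sets can be perfectly balanced, via the window-sliding trick. In the lower bound the technical heart — and the step I expect to be hardest — is the discharging estimate that produces the exact constant $\tfrac{5k+6}{6}$: one must classify how a single query with an adversarial answer constrains the colouring, handle the ``split-reversal'' case correctly, and show that in the worst case it destroys hidden rebalancings for no more than $\tfrac{5k+6}{6}$ balls. Should that bookkeeping fall short of the stated constant, the fallback is to optimise the adversary's choice of $\sigma_{0}$ and of the family of rebalancing sets allowed.
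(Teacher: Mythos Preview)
The paper does not prove this theorem. It is stated in Section~2 as a known adaptive result, with the upper bound attributed to Eppstein and Hirschberg \cite{EH2015} and the lower bound to Gerbner and Vizer \cite{GV2017}; no argument is given here. So there is no ``paper's own proof'' to compare your proposal against.

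On the merits of your sketch: the upper-bound strategy is the right idea and is essentially the Eppstein--Hirschberg approach --- build a certified monochromatic probe of size $\lceil k/2\rceil$, then test the remaining balls $\lfloor k/2\rfloor$ at a time against it. Your preliminary phase is a bit loose for odd $k$: the sliding-window argument as written (``a long run of perfectly balanced windows forces a monochromatic stretch'') is not quite right --- a long run of balanced $(k+1)$-windows only tells you that balls $j$ and $j+k+1$ have the same colour, which gives a periodic pattern rather than a monochromatic one, and you still have to explain how to break out of that in $O(k)$ queries. This is fixable (and Eppstein--Hirschberg do something along these lines), but it needs more care than you have supplied.

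The lower-bound half is not a proof. You set up an adversary maintaining a balanced colouring $\sigma_0$ together with an unbalanced perturbation $\sigma_0\oplus S$, which is indeed the right framework, but the entire content of the bound is the claim that one query kills at most $\tfrac{5k+6}{6}$ units of ``flexibility'' --- and this is simply asserted, with the constant pulled from the statement rather than derived. The actual argument in \cite{GV2017} is a more structured adversary that tracks several categories of balls (those whose colour is fixed, those still free, those tied in pairs, etc.) and analyses case by case how a query answer redistributes them; the $5k+6$ arises from a specific weighting of those categories, not from a generic ``discharging'' over balls of $Q$. As written, your lower-bound paragraph is a description of the shape a proof might have, not a proof.
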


$\bullet$ \texttt{General (or Yes-No) Model = GM}:

\begin{framed}

\noindent$\mathbf{Answer:}$ YES, if there exist two balls of different colors, NO otherwise.

\end{framed}

\begin{thm}[Gerbner, Vizer \cite{GV2017}]\label{ayesno} For any $2 \le k < n$ with $2k-1\le n$ we have $$\frac{3n+5}{4} \le A(GM,k,n)\le n-k+\mathsf{m}(k).$$

\end{thm}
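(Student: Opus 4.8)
The plan is to prove the two bounds separately; the adaptive lower bound $A(GM,k,n)\ge \frac{3n+5}{4}$ should come from an adversary argument, and the upper bound $A(GM,k,n)\le n-k+\mathsf{m}(k)$ from an explicit strategy built on a hypergraph without Property B.

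For the \textbf{upper bound}, I would reserve a fixed $k$-set $S_0$ of balls on which we place the edges of a smallest $k$-uniform hypergraph $\cH$ without Property B, so $|E(\cH)|=\mathsf{m}(k)$; note $|S_0|=k\le n$ is fine since $\mathsf{m}(k)$ counts edges, and the vertex set of the extremal hypergraph may be taken to have size $k$ (or we enlarge $S_0$ a little — the accounting $n-k+\mathsf{m}(k)$ suggests exactly $k$ vertices are used by the ``Property~B'' block and the remaining $n-k$ balls are handled one query each). The idea: asking all $\mathsf{m}(k)$ edges of $\cH$ inside $S_0$ forces, by failure of Property B, the existence of a monochromatic edge, i.e.\ a query answered NO; this NO reveals $k$ balls of a common, \emph{known-to-be-equal} color, which we can then use as a ``reference block.'' Having $k$ balls of one color in hand, each remaining ball $x\notin S_0$ can be tested by one query consisting of $x$ together with $k-1$ of these reference balls: the answer is NO iff $x$ has the reference color. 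This determines the color of every ball with $n-k$ further queries, after which the majority (or a tie) is read off directly. One must check the boundary case where \emph{every} edge of $\cH$ gets answer YES is impossible — that is precisely the defining property of not having Property~B — and handle the bookkeeping for which color is the majority; both are routine.

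For the \textbf{lower bound} I would run an adversary/adaptive adversary argument even though we only need it for the adaptive value (which dominates the non-adaptive one anyway, so this also feeds later sections). The adversary answers YES to as many queries as possible; a YES answer is very weak — it only says ``not all equal'' on a $k$-set — so a potential/counting argument should show that after $q$ queries with all-YES answers, the set of colorings consistent with the transcript is still large enough that no ball can be certified as majority and no tie can be certified. The constant $3/4$ and the additive $+5$ suggest a weighting where each query can ``resolve'' at most roughly $4/3$ balls' worth of information, presumably because pinning down the color of a ball typically requires it to appear in a NO-answered query or in several queries whose YES answers interlock. Concretely I expect: maintain a graph/partition structure recording forced equalities/inequalities, show a single query increases the ``resolved'' quantity by a bounded amount, and conclude $q\ge \frac{3n+5}{4}$ by the time the structure is rich enough to name a majority ball.

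The \textbf{main obstacle} is the lower bound: YES answers give only a disjunctive (``not monochromatic'') constraint, which is awkward to track, so the adversary must be designed so that these disjunctions never combine to force a large monochromatic-free structure prematurely, while simultaneously the potential function must be tight enough to yield the exact constant $3/4$ rather than a weaker $1/2$ or $2/3$. I would expect the proof to split into showing (a) the adversary can keep the ``known color classes'' small and (b) a symmetric pair of colorings remains consistent, blocking any claim of majority; getting the additive constant right will require careful handling of small configurations.
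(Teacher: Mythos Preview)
This theorem is not proved in the present paper. It appears in Section~2 as one of the cited \emph{adaptive} results (attributed to~\cite{GV2017}) that motivate the non-adaptive questions treated in Sections~3--5; the paper gives no proof of it, so there is nothing here to compare your proposal against.

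That said, two remarks on your sketch. For the upper bound, your overall two-phase idea (first locate a monochromatic $k$-set via a hypergraph without Property~B, then compare each of the remaining $n-k$ balls against it) is the natural one and matches the form $n-k+\mathsf{m}(k)$. However, your parenthetical that ``the vertex set of the extremal hypergraph may be taken to have size $k$'' is not right: a single $k$-edge already has Property~B, and in general the extremal $k$-uniform family realizing $\mathsf{m}(k)$ lives on more than $k$ vertices. This does not affect the query count---once some edge gets answer NO you have a monochromatic $k$-set $S$, and you then spend exactly $n-k$ further (adaptive) queries on the balls outside $S$, regardless of how many vertices the Property-B block used---but you should not lean on that claim. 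For the lower bound, your outline is too schematic to assess; the potential-function argument you gesture at would need to be made precise to see whether it actually yields the constant $3/4$, and since the paper does not supply a proof there is no benchmark here.
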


$\bullet$ \texttt{Borzyszkowski's Model = BM}, \cite{DKW2011,B2014}:


\begin{framed}

\noindent$\mathbf{Answer:}$ YES, if there exist two balls of different colors, and such a pair is pointed out, NO if all balls have the same color.

\end{framed}

\begin{thm}[Borzyszkowski \cite{B2014}]\label{borz} For all $3 \le k \le n$ with $2k-3 \le n$ we have
\begin{displaymath}
A(BM,k,n)=
\left\{ \begin{array}{l l}
\frac{n}{2}+k-2 & \textrm{if $n$ is even,}\\
\lfloor \frac{n}{2} \rfloor + k-3 & \textrm{if $n$ is odd.}\\
\end{array}
\right.
\end{displaymath}

\end{thm}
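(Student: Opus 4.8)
The plan is to prove the two matching inequalities. On the upper-bound side everything will rest on one \emph{forced-padding} observation: if a set $C$ of $k-1$ balls is already known to be monochromatic and $u$ is any other ball, then the single query $C\cup\{u\}$ is guaranteed to be informative, since the adversary cannot point at a differing pair inside $C$ — so either the answer is \textsc{no} and $u$ joins the monochromatic class, or the pointed pair is $\{c,u\}$ and $u$ is certified to lie in the opposite class. (This is exactly where the hypothesis $k\ge 3$ enters; for $k=2$ the observation is vacuous, consistent with the bound failing there.)

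For the upper bound I would have the questioner maintain a partition of $[n]$ into monochromatic classes, a bipartite ``differs'' relation between the classes recording all that has been learned, and a set of \emph{free} balls about which nothing is known. In a first, pairing stage she queries $k$ free balls at a time: a \textsc{no} answer yields a monochromatic $k$-set (and if $k>n/2$ she is already done), while a \textsc{yes} answer produces one new pair of opposite-coloured balls and returns the other $k-2$ balls to the free set; this stage ends with only $O(k)$ free balls left. In a second, clean-up stage she first spends a few queries building — via repeated use of the forced-padding observation, bootstrapping from the already-known difference pairs — a monochromatic class of size $k-1$, and then resolves the remaining free balls one by one, stopping as soon as the accumulated structure is conclusive: either some monochromatic class is provably larger than the number of still-unresolved balls (any of its balls is then a majority ball), or every ball sits in a difference pair or such a class and the two colour classes differ by a known even amount (yielding a majority ball, or a certificate of balance). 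I would then count: a careful accounting of the two stages, using $n\ge 2k-3$ to ensure they do not interfere and splitting into cases by the parities of $n$ and $k$, should give exactly $\tfrac n2+k-2$ for even $n$ and $\lfloor\tfrac n2\rfloor+k-3$ for odd $n$.

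For the lower bound I would use an adversary who maintains a non-empty family of colourings consistent with all answers (equivalently, a monochromatic-class partition together with a bipartite ``differs'' graph on the classes), with the invariant that the family still admits at least two different correct outputs, so the questioner cannot yet stop. On a query $Q$ she answers \textsc{no} only when forced, i.e.\ when $Q$ lies inside a single class, and otherwise answers \textsc{yes}, pointing at a pair $\{a,b\}\subseteq Q$ chosen to leak as little as possible — by preference a pair whose endpoints are already known to differ (no new information at all), and otherwise a pair that merges two classes while keeping as many balls as possible untouched. Tracking a potential equal to the number of balls already pinned down relative to a fixed colouring (a balanced one when $n$ is even), one shows it can grow by at most $2$ per query while no monochromatic $(k-1)$-set has been created and by at most $1$ afterwards, and that the questioner cannot conclude before it reaches essentially $n$ on top of an unavoidable start-up deficit of $k-2$; this should give $A(BM,k,n)\ge\tfrac n2+k-2$ for even $n$, and the analogous bound for odd $n$.

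The hard part will be the bookkeeping that pins the additive term down to \emph{exactly} $k-2$ (resp.\ $k-3$) rather than merely $O(k)$: on the questioner's side, organising the clean-up stage so that no query is wasted beyond the unavoidable start-up and finishing cost, across all parity cases; on the adversary's side, proving that a zero- or low-information pair is genuinely available to point at until the very last moment — which is precisely the place where the hypothesis $n\ge 2k-3$ is needed. Making the two $O(k)$ error terms meet exactly is the real content of the theorem.
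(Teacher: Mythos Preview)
The paper does not contain a proof of this statement. Theorem~\ref{borz} is quoted in Section~2 as a known adaptive result due to Borzyszkowski~\cite{B2014}; the present paper only studies the non-adaptive quantities $N(\,\cdot\,,k,n)$, and all of its proofs are in Sections~3--5. So there is no ``paper's own proof'' to compare your proposal against.

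As for the proposal itself: what you have written is a strategy outline, not a proof. The forced-padding observation is correct and is indeed the key mechanism, and the two-phase questioner (pair off free balls, then bootstrap a monochromatic $(k-1)$-set and sweep) together with a potential-tracking adversary is the natural architecture. But you explicitly defer every piece of actual work: the phrase ``a careful accounting \dots\ should give exactly'' is a promissory note, not an argument, and you yourself identify that pinning the additive constant to exactly $k-2$ (resp.\ $k-3$) is ``the real content of the theorem''. Likewise on the lower-bound side, the claim that the potential grows by at most~$2$ before a monochromatic $(k-1)$-set appears and by at most~$1$ afterwards needs a precise definition of the potential and a case check that the adversary can always realise these bounds; asserting that ``this should give'' the answer is not enough. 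If you want a self-contained proof you will have to actually carry out both countings, or else consult Borzyszkowski's original paper.
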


\subsection*{Basic inequalities}

By definition, the following inequalities hold for the above models
for all $2 \le k \le n$:
$N(OM,k,n) \le N(CM,k,n) \le N(GM,k,n)$ and $N(OM,k,n) \le N(BM,k,n) \le N(GM,k,n).$

\section{Output model}

\begin{thm}\label{nom} For $2 \le k \le n$ we have

\begin{displaymath}
N(OM,k,n)=
\left\{ \begin{array}{l l}
\lceil \frac{n-1}{k-1} \rceil & \textrm{if $n$ is even,}\\
\lceil \frac{n-2}{k-1} \rceil & \textrm{if $n$ is odd.}\\
\end{array}
\right.
\end{displaymath}

\end{thm}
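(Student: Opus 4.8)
The plan is to prove matching upper and lower bounds. For the upper bound, recall from Theorem~1 (the adaptive result of De~Marco--Kranakis and Eppstein--Hirschberg) that $A(OM,k,n)$ is at most $\lceil \frac{n-1}{k-1}\rceil$ for $n$ even and $\lceil \frac{n-2}{k-1}\rceil$ for $n$ odd. The key observation is that the strategy achieving these adaptive bounds is in fact \emph{non-adaptive}: one simply partitions (most of) the balls into $\lceil\frac{n-1}{k-1}\rceil$ (resp.\ $\lceil\frac{n-2}{k-1}\rceil$) pairwise-disjoint queries, each of size $k$, arranged so that consecutive queries overlap in one ball (a ``path-like'' chain of queries), possibly leaving one ball uncovered when $n$ is odd. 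This set of queries can be fixed in advance, and once all answers are received one can reconstruct a $2$-coloring of the covered balls up to global swap: the overlap ball in each consecutive pair tells you how to glue the partitions of adjacent queries together. From that one either identifies a majority ball or detects a tie. So I would spell out this chain-of-overlapping-queries construction and verify that the answers determine the coloring up to complementation, which suffices to name a majority ball or report equality; this gives $N(OM,k,n)\le A(OM,k,n)$ and hence the stated upper bound.

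For the lower bound I would argue via an adversary / information-theoretic counting argument tailored to non-adaptivity. Fix any non-adaptive family $\cQ=\{Q_1,\dots,Q_t\}$ of $k$-element queries that is guaranteed to solve the problem. Consider the ``query hypergraph'' structure: I claim that the union $\bigcup_i Q_i$ must cover at least $n-1$ balls (if $n$ is even) or $n-2$ balls (if $n$ is odd), because an uncovered ball's color is entirely unconstrained, and leaving two uncovered balls (or one uncovered ball when $n$ is even) allows an adversary to flip the majority/tie status. Granting that, I then need that the queries, viewed as a hypergraph on their support, must be ``connected enough'': more precisely, the partition-answers only reveal, within each query, which balls share a color, so the global coloring is pinned down only up to independent swaps on the connected components of the hypergraph, and if there are two or more components the adversary can flip one component to destroy any claimed majority ball or to switch between a near-tie and a tie. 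Hence the support (of size $\ge n-1$ or $\ge n-2$) must be covered by a \emph{connected} $k$-uniform hypergraph, and a connected $k$-uniform hypergraph on $m$ vertices has at least $\lceil\frac{m-1}{k-1}\rceil$ edges (standard: each new edge adds at most $k-1$ new vertices). Plugging $m=n-1$ or $m=n-2$ gives exactly the claimed bound.

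The main obstacle is the lower-bound connectivity argument: one must handle with care the parity of $n$ and the possibility that the hypergraph is connected but the coloring is still ambiguous, or that disconnectedness is tolerable because the ``missing'' component happens to be balanced. Concretely, I expect the delicate case to be $n$ odd, where leaving exactly one ball uncovered is permitted: here one must show that if \emph{two} balls are uncovered, or if the covered part splits into $\ge 2$ components, then the adversary wins — this requires checking that a global swap on one component changes whether a designated ball is in the majority, which is fine as long as the component is not exactly half the covered balls, and one must argue that the adversary can always arrange a configuration where some component is ``swingable.'' I would isolate this as a lemma about colorings of disjoint unions and then feed it into the edge-count bound $\lceil \frac{m-1}{k-1}\rceil$ to close the gap. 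The even/odd bookkeeping at the very end — reconciling $m=n-1$ vs $m=n-2$ with the two cases in the theorem statement — is routine once the structural lemma is in place.
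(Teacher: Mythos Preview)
Your upper bound is exactly the paper's: build a connected $k$-uniform hypergraph on $n$ (resp.\ $n-1$) vertices by a chain of $k$-sets overlapping in one ball, and read off the full bipartition from the answers. Fine.

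The lower bound, however, has a genuine gap. You try to separate the argument into ``enough vertices are covered'' and ``the covered part is connected'', but this decomposition is both numerically inconsistent and structurally wrong in the odd case.

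First, the arithmetic does not close. You write that the support has size $m\ge n-1$ for $n$ even and $m\ge n-2$ for $n$ odd, and then plug these into $\lceil\tfrac{m-1}{k-1}\rceil$. That yields $\lceil\tfrac{n-2}{k-1}\rceil$ and $\lceil\tfrac{n-3}{k-1}\rceil$ respectively, each one short of the claimed bound. In fact your own parenthetical (``one uncovered ball when $n$ is even'' already lets the adversary win) shows that for even $n$ you need $m=n$, not $m\ge n-1$; and for odd $n$, two uncovered balls already lose (three singleton-or-larger pieces), so you need $m\ge n-1$, not $m\ge n-2$.

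Second, and more seriously, your key structural lemma for odd $n$ is false. You propose to show that ``if the covered part splits into $\ge 2$ components, then the adversary wins''. It does not. Take $n$ odd, all balls covered, and exactly two connected components $A,B$ with partitions $(a_1,a_2)$ and $(b_1,b_2)$, say $a_1>a_2$, $b_1>b_2$. If $a_1-a_2>b_1-b_2$ then the $a_1$-class is in the majority under \emph{both} possible gluings; if $a_1-a_2<b_1-b_2$ the $b_1$-class is; equality is impossible by parity. So the questioner can always name a majority ball, and you will not be able to prove that the support must be connected.

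The fix, which is what the paper does, is to drop the coverage/connectivity split and count connected components of the query hypergraph on all of $[n]$ (uncovered balls being singleton components). For $n$ even, two components already let the adversary set equal nonzero imbalances in each part, making ``majority vs.\ tie'' undecidable, so one needs a single component and hence $\ge\lceil\tfrac{n-1}{k-1}\rceil$ edges. For $n$ odd, two components are fine (as above), but three components with imbalances in $\{1,2\}$ let the adversary flip signs so that no single ball is safe; hence at most two components and $\ge\lceil\tfrac{n-2}{k-1}\rceil$ edges. This is the clean statement to aim for instead of your connectivity-of-support lemma.
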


\begin{proof}
A hypergraph is connected if we cannot partition the underlying set into two parts such that no edge contains vertices from both parts. Observe that the least number of edges of a connected $k$-uniform hypergraph on an $n$-element underlying set is $\lceil\frac{n-1}{k-1}\rceil$. Indeed, we can build one by taking an arbitrary set of size $k$, and add new edges that intersect the union of the earlier ones in one element. On the other hand, if we are given a connected hypergraph, and we take an arbitrary set from it, there must exist another set that intersects it, then another set that intersects the union of the earlier ones, and so on. Similarly, if a hypergraph has $d$ connected components, then the least number of edges it can have is $\lceil\frac{n-d}{k-1}\rceil$.

It is easy to see that if the query hypergraph $\cQ$ is connected, we find more than a majority ball; we find the partition to color classes. This proves the upper bound in case $n$ is even. If $n$ is odd, we can ask a connected query hypergraph on $n-1$ vertices. A ball that is a majority ball among those vertices is also a majority ball in the whole set of balls, while if there is no majority ball there, then the remaining ball is a majority ball. This finishes the proof of the upper bound. 

Let us continue with the lower bound. If the query hypergraph $\cQ$ is disconnected and $n$ is even, we consider an arbitrary partition of the underlying set of balls into two parts with no edge intersecting both. Then the answers might be that in both parts one of the color classes is larger by the same number $l>0$. What we mean is that we take a coloring with the above property, and then the answers do not tell any further information. If the larger color class is the same color in the parts, there is majority, otherwise there is not. We cannot find out which one is the case, hence we cannot show a majority ball. This finishes the proof in case $n$ is even. If $n$ is odd, and the underlying set can be partitioned into three parts such that no edge contains vertices from two parts, then it is possible that in each part the difference between the size of the color classes is 1 or 2. In this case no majority ball can be shown. We have already mentioned that is easy to see that at least $\lceil\frac{n-2}{k-1}\rceil$ queries are needed to avoid this.
\end{proof}

\section{Counting model}

\begin{thm}\label{ncmu} For $2 \le k, n$ with $2k-1 \le n$ we have 

\begin{displaymath}
N(CM,k,n) \le
\left\{ \begin{array}{l l}
 n-k+1 & \textrm{if $k$ is even},\\
(n-k+1)\times(1+\mathsf{d}(k-1,n)) & \textrm{if $k$ is odd.}\\
\end{array}
\right.
\end{displaymath}

\end{thm}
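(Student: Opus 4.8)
The plan is to treat the two parities of $k$ separately, with the same basic gadget underlying both. Recall that in the Counting Model the answer to a query $Q$ of size $k$ is precisely the size $i\le k/2$ of the smaller colour class inside $Q$. The gadget is: fix a $(k-1)$-element set $S$ and, for every ball $x\notin S$, ask $S\cup\{x\}$, which uses $n-k+1$ queries. If $S$ is \emph{unbalanced} — say its two colour classes inside $S$ have sizes $\beta<k-1-\beta$ — then a one-line computation shows that the answer to $S\cup\{x\}$ is $\beta$ when $x$ has the majority colour of $S$ and $\beta+1$ when $x$ has the minority colour of $S$. Reading off these answers leaves two cases. If both values $\beta$ and $\beta+1$ appear, then we learn the colour of every $x\notin S$ relative to $S$ together with the value $\beta$, hence the exact sizes of the two global colour classes, and we can exhibit a majority ball (or certify none exists). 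If only one value appears, all balls outside $S$ share a single colour, so every ball of the other colour lies in $S$; since $n-k+1>k-1$ by $n\ge 2k-1$, that shared colour is the strict global majority, and any ball outside $S$ is a majority ball. When $k$ is even $|S|=k-1$ is odd, hence $S$ is unbalanced for every colouring, and a single arbitrary $(k-1)$-set already gives $N(CM,k,n)\le n-k+1$.

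For odd $k$ the difficulty is that $S$ (of even size $k-1$) may be \emph{balanced}, in which case every query $S\cup\{x\}$ returns the maximal value $(k-1)/2$ and tells us nothing. To get around this, take a $(k-1)$-uniform hypergraph $H$ on $[n]$ with $\mathsf{d}(k-1,n)$ edges that does not have Property C (it exists since $n\ge 2k-1$), fix one of its edges $S_1$, and adjoin one extra $(k-1)$-set $S_0$ disjoint from $S_1$ (possible as $n\ge 2(k-1)$; when $k=3$ pick $S_0$ disjoint even from the three-vertex support of $H$, which fits since $n\ge 5$). For every edge $S$ of $H\cup\{S_0\}$ and every $x\notin S$ we ask $S\cup\{x\}$; this is at most $(n-k+1)\bigl(1+\mathsf{d}(k-1,n)\bigr)$ queries. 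Call an edge \emph{vocal} if some answer on it is $<(k-1)/2$, and \emph{mute} otherwise. A balanced edge is mute, so a vocal edge is unbalanced, and then its own $n-k+1$ answers already exhibit a majority ball by the previous paragraph. Hence it suffices to prove that for every colouring at least one edge of $H\cup\{S_0\}$ is vocal.

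Suppose not: every edge is mute. Since $H$ fails Property C, some edge $S\in E(H)$ is unbalanced; being mute forces its minority count to be exactly $(k-3)/2$ and \emph{all} of $[n]\setminus S$ to carry the minority colour $\delta$ of $S$. Thus the other colour $\gamma$ occurs only inside $S$, exactly $(k+1)/2$ times; write $C$ for the set of $\gamma$-balls, so $|C|=(k+1)/2$, $C\subseteq S$, and $\delta$ is the strict global majority. Running the same computation for an arbitrary edge $T$ shows that a mute edge is either balanced, giving $|T\cap C|=(k-1)/2$, or unbalanced, giving $C\subseteq T$; the only other option, $|T\cap C|\le(k-3)/2$, is impossible because then a $\gamma$-ball and a $\delta$-ball would both lie outside $T$ (this is where $n\ge 2k-1$ enters) and would force two distinct answers on $T$. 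So $|T\cap C|\ge(k-1)/2$ for every edge $T$ of $H\cup\{S_0\}$. Applied to the disjoint pair $S_0,S_1$ this gives $(k+1)/2=|C|\ge|C\cap S_0|+|C\cap S_1|\ge k-1$, which is false when $k\ge 5$; and when $k=3$ equality would force $C=\{s_0,s_1\}$ with $s_0\in S_0$, $s_1\in S_1$, and $C$ itself an edge of $H\cup\{S_0\}$, which is impossible since $\{s_0,s_1\}\ne S_0$ and $s_0$ lies outside the support of $H$ by our choice of $S_0$. Either way we reach a contradiction, so some edge is vocal.

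The step I expect to be the main obstacle is exactly this last one — excluding the ``all edges mute'' configuration. This is where the extra edge $S_0$ (the ``$+1$'' in the bound), its disjointness from an edge of $H$, and the hypothesis $n\ge 2k-1$ are all needed; the rest of the argument is bookkeeping on the two possible answer values $\beta$ and $\beta+1$.
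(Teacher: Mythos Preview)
Your proof is correct and follows essentially the same approach as the paper: for even $k$ you both fix one $(k-1)$-set and query all its $k$-supersets, and for odd $k$ you both take a minimum $(k-1)$-uniform family without Property~C, adjoin one extra edge, and derive a contradiction from the ``all answers equal $(k-1)/2$'' scenario via the bound $|T\cap C|\ge(k-1)/2$ for every edge $T$. The only difference is in the choice of the extra edge: the paper adds an edge meeting some existing edge in fewer than $(k-3)/2$ vertices and gets the contradiction from the pairwise-intersection lower bound, whereas you add an edge disjoint from a fixed $S_1$ and compare $|C|$ directly with $|C\cap S_0|+|C\cap S_1|$. Your version has the small advantage that it explicitly handles $k=3$, where the paper's instruction ``add an edge with intersection less than $(k-3)/2=0$'' is vacuous; otherwise the two arguments are interchangeable.
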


\begin{proof}
If $k$ is even, then we ask all the $k$-sets containing a given $(k-1)$-set $A \subset [n]$. Note that - using that $k$ is even - we know that $i,j \in [n]\setminus A$ have the same color if and only if the answers to the queries $A \cup \{i\}$ and $A \cup \{j\}$ are the same. Thus we can partition $[n] \setminus A$ into two parts such that the balls in each part have the same color. Additionally, if we ever get different answers for two such queries, then we know the number of balls of the corresponding colors inside $A$ and we can choose a majority ball or find out that there is none. If this is not the case, then - knowing that $n \ge 2k-1$ - we have that all balls in $[n] \setminus A$ are of the majority color. 

If $k$ is odd, then the previous argument does not work, as the answers to some queries could be that there are $(k+1)/2$ and $(k-1)/2$ balls of the two colors, even if the balls added to $A$ are of different color. However, it cannot happen if $A$ contains different number of red and blue balls. So we use unbalanced colorings here. Let us take a $(k-1)$-uniform hypergraph $\mathcal{F}$ on $[n]$ that does not have Property C and has cardinality $\mathsf{d}(k-1,n)$. Moreover, if every pair of edges in $\mathcal{F}$ has intersection of size at least $(k-3)/2$, then we add another edge of size $k-1$ that intersects one of them in a set of size less than $(k-3)/2$. This way we get a hypergraph $\mathcal{F}'$, and the query set consists of all $k$-sets containing edges of $\mathcal{F}'$. There is a set $F\in\mathcal{F}'$ that is unbalanced, i.e. it contains at least $(k+1)/2$ ball of the same color, say blue. 

If every answer (to every query) were the number $(k-1)/2$, then all the balls not in $F$ would be red. In this case there would be exactly $(k+1)/2$ blue balls and every member of $\mathcal{F}'$ would contain at least $(k-1)/2$ of them. Then their intersection would have size at least $(k-3)/2$, a contradiction. That is why we added the additional set to $\mathcal{F}$.

Thus there is an answer different from $(k-1)/2$. Now similarly to the case when $k$ is even, we can find $F$, and then we know the relation of the outside balls to each other, and the number of the balls of the corresponding colors inside $F$. 
\end{proof}

\begin{thm}\label{ncml}

For $2 \le k$ and sufficiently large $n$ we have

\begin{displaymath}
N(CM,k,n) \ge
\left\{ \begin{array}{l l}
2n/(k+1) & \textrm{if $k$ is even and $n$ is even},\\
(2n-1)/(k+1) & \textrm{if $k$ is even and $n$ is odd}.\\
\lceil \frac{n}{k}\mathsf{d}(k-1,n-1) \rceil & \textrm{if $k$ is odd, $n$ is even and large enough},\\
\lceil \frac{n-1}{2k} \mathsf{d}(k-1,n-1) \rceil & \textrm{if $k$ is odd, $n$ is odd and large enough}.\\
\end{array}
\right.
\end{displaymath}

\end{thm}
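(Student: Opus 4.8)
The plan is to prove each of the four lower bounds by an adversary argument: we fix a query hypergraph $\cQ$ with too few edges and exhibit two colorings that are consistent with the same answers but have different majority outcomes (or different majority balls), so $\cQ$ cannot be a valid strategy. The overall shape mirrors the upper bound proof of Theorem~\ref{ncmu}: in the counting model, a query $Q$ that contains $i<k/2$ balls of one color is "informative" only in a limited way, and the degenerate answer $\lfloor k/2\rfloor$ (a nearly balanced query) carries essentially no information. So the adversary will try to keep as many queries as possible at that degenerate value.

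For the even-$k$ cases, here is the key observation: if a query $Q$ gets the answer $k/2$, then the coloring restricted to $Q$ is balanced, and an adversary can often swap colors within such a query without changing any answer — provided the query is ``isolated enough''. More precisely, I would show that if $\cQ$ has fewer than $2n/(k+1)$ edges (resp.\ $(2n-1)/(k+1)$ for odd $n$), then by a counting/averaging argument there is a large set $S$ of balls that is ``free'': either untouched by queries, or touched only by queries that remain balanced under a recoloring of $S$. The threshold $2n/(k+1)$ comes from the following: a single query of size $k$ can ``pin down'' the color-difference contribution of at most its $k$ vertices, but to force the global majority you need to control $n$ vertices up to a deficiency of $1$; each query that is answered $k/2$ constrains only a balanced block, and one checks that $m$ queries leave at least $n-m(k+1)/2+O(1)$ vertices genuinely free, so $m\ge 2n/(k+1)-O(1)$ is forced. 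I would then split the free vertices so that in one coloring colour-class red wins by the minimum amount and in another colour-class blue wins, while all answers coincide; the parity bookkeeping (whether the forced surplus is $0$, $1$, or $2$) is exactly what produces the $n$ even vs.\ $n$ odd distinction.

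For the odd-$k$ cases the extra factor $\mathsf{d}(k-1,n-1)$ appears because, as in the proof of Theorem~\ref{ncmu}, to extract information from a query of odd size $k$ the adversary must be able to force it to answer $(k-1)/2$, and it can do so unless the $(k-1)$-subsets sitting inside the queries form a hypergraph without Property $C$. So I would argue: fix any ball $b$, look at $[n]\setminus\{b\}$ (an $(n-1)$-set), and consider the $(k-1)$-uniform hypergraph $\cF$ of all $(k-1)$-sets contained in some query. If $\cF$ has fewer than $\mathsf{d}(k-1,n-1)$ edges then it has Property $C$, i.e.\ there is a balanced $2$-colouring of $[n]\setminus\{b\}$ making every edge of $\cF$ balanced; under such a colouring every query (being a $(k-1)$-edge plus one more vertex) answers $(k-1)/2$, so the whole query set is uninformative and the adversary wins. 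Hence $\cF$ must have $\ge \mathsf{d}(k-1,n-1)$ edges. Now a single query of size $k$ contains only $k$ subsets of size $k-1$, and by a covering argument each ball lies in few queries on average; pushing this through, the number of queries must be at least roughly $\frac{n}{k}\mathsf{d}(k-1,n-1)$ when $n$ is even, and the loss of a factor $2$ (giving $\frac{n-1}{2k}\mathsf{d}(k-1,n-1)$) in the odd-$n$ case comes from having to set aside the ball $b$ and handle the extra surplus-$2$ freedom, exactly as in Theorem~\ref{nom}.

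The main obstacle I expect is the combinatorial optimisation that converts ``few queries'' into ``a large free/uninformative set'': one must be careful that a query answered non-degenerately near one part of the ball set doesn't secretly constrain a far-away part, and that the free vertices can be partitioned to realise \emph{opposite} majorities simultaneously. For the odd-$k$ bounds the delicate point is the double counting between queries (size $k$) and their $(k-1)$-subsets while ensuring the Property~$C$ failure is localised to a single query cluster rather than spread across all of $\cF$ — essentially one wants, for each ball, a cluster of queries whose inner $(k-1)$-sets already fail Property $C$, and summing these lower bounds over balls (each query counted at most $k$ times, or $2k$ times in the odd case) yields the stated inequality. I would handle the ceilings and the ``sufficiently large $n$'' hypotheses at the very end, since they only absorb the $O(1)$ error terms from the parity analysis.
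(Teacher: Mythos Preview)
Your overall plan (adversary plus double counting of degrees) is the right shape, and in your last paragraph you even land on the correct mechanism for odd $k$: for each ball the link hypergraph must fail Property~$C$, then sum over balls. But the body of the proposal does not carry this out, and in both regimes the argument as written has gaps.

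\textbf{Even $k$.} The paper's proof is not a ``free set'' argument but a pure degree count: (i) no ball has degree~$0$, and (ii) no query contains two balls of degree~$1$. For (ii) the adversary answers $(k-2)/2$ --- not $k/2$ --- with the other $k-2$ balls of $Q$ split evenly; the two degree-$1$ balls are then forced to share a colour, but that colour is undetermined, and the outside is arranged so the majority hinges on it. From (i)--(ii), writing $x$ for the number of degree-$1$ balls, one gets $x\le q$ and $kq=\sum\deg\ge 2(n-x)+x$, hence $q\ge 2n/(k+1)$. Your claim that ``$m$ queries leave at least $n-m(k+1)/2+O(1)$ vertices free'' has no visible justification (why $(k+1)/2$ per query?), and swapping all colours inside a query answered $k/2$ leaves the global red--blue difference unchanged, so it cannot flip the majority. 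The odd-$n$ tweak in the paper is the same degree argument with the concession that \emph{one} query may contain two degree-$1$ balls.

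\textbf{Odd $k$.} Your $\cF$ is described as ``all $(k-1)$-sets contained in some query'', which has up to $kq$ edges; forcing that to fail Property~$C$ yields only $q\ge \mathsf{d}(k-1,n-1)/k$, far too weak. The correct object --- which you name only at the very end --- is the \emph{link} of a single ball $b$, namely $\{Q\setminus\{b\}:b\in Q\in\cQ\}$, with exactly $\deg(b)$ edges. If this link has Property~$C$, the adversary fixes a colouring of $[n]\setminus\{b\}$ making every query through $b$ balanced; queries \emph{not} through $b$ are answered according to this fixed colouring (they need not return $(k-1)/2$, contrary to your ``every query answers $(k-1)/2$''); the point is only that no answer depends on the colour of $b$. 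Hence $\deg(b)\ge\mathsf{d}(k-1,n-1)$ for every $b$, and summing gives $kq\ge n\,\mathsf{d}(k-1,n-1)$. For odd $n$ the factor $1/2$ does not come from ``setting aside one ball'' as in Theorem~\ref{nom}; rather, if two balls $i,j$ together lie in fewer than $\mathsf{d}(k-1,n-1)$ queries, one replaces $\{i,j\}$ by a phantom vertex, applies Property~$C$ to the resulting $(k-1)$-uniform hypergraph, and shows that neither $i$, nor $j$, nor any other ball can be declared majority.
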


\begin{proof}
To prove the lower bound for $k$ even and $n$ even, 
we first show that any query can contain at most one vertex of degree one. Indeed, suppose a query $Q$ contains $i$ and $j$ (with $i \neq j$) of degree one, then it is possible that the answer to $Q$ is the number $(k-2)/2$ and there are $(k-2)/2$ red and $(k-2)/2$ blue balls besides $i$ and $j$ in $Q$. Furthermore, it is possible that altogether there are $n/2$ blue and $(n-4)/2$ red balls besides $i$ and $j$. We know that $i$ and $j$ have the same color, but we do not know if it is blue or red. Thus we do not know if there is a majority color or not. 

Let $q$ denote the number of queries, and $x\le q$ be the number of vertices of degree one. First we show that there is no vertex of degree 0. Indeed suppose that there is a vertex $v$ of degree 0. Then it is possible that among the other vertices, the number of blue balls is larger than the number of red balls by one. In this case we do not know that there is a majority ball or not, as we do not know anything about the color of $v$. Therefore, we have at least $n-x$ vertices of degree at least 2. Thus, the number of pairs $(Q,w)$ where $Q$ is a query, $w$ is a vertex and $w\in Q$ (i.e. the sum of the degrees) is at least $2(n-x)+x$. On the other hand, this number is exactly $kq$. Thus we have $kq\ge 2n-x\ge 2n-q$ and rearranging gives the bound. 

In the other cases below, we just state and prove the degree conditions that are needed to obtain the desired bound, and omit the similar easy calculation that finishes the proof.

In the case $k$ is even and $n$ is odd, it is enough to show that at most one query can contain two elements of degree one. We prove it by contradiction, since otherwise we can get two monochromatic pairs, and it is possible that there are $(n-1)/2$ blue and $(n-7)/2$ red balls besides those four balls. In this case we cannot show a majority ball.

\vspace{1mm}

In the case $k$ is odd, while $n$ is even and large enough, it is enough to show that for every $i \in [n]$, its degree is at least $\mathsf{d}(k-1,n-1)$. This is true, since otherwise we can color the hypergraph with vertex set $\cup \{Q : i \in Q \} \setminus \{i\}$ and edge set $\{Q \setminus \{i\} : i \in Q\}$ (the open neighborhood hypergraph or link hypergraph of $i$) in a balanced way, thus we do not get any information about the color of $i$. Then - using that $n$ is large enough and even - we can color the remaining elements (i.e. $[n] \setminus \cup \{Q : i \in Q \}$) such that the coloring of all the balls is balanced. But then it depends on the color of $i$ if there is a majority color or not. 

If $n$ is odd, a similar argument shows that all but one of the balls have degree at least $\mathsf{d}(k-1,n-1)/2$. Indeed, otherwise there are $i,j \in [n]$ with $i\neq j$ such that less than $\mathsf{d}(k-1,n-1)$ queries contain at least one of $i$ or $j$. Let $\cF$ be the hypergraph that has these queries as edges. Let us remove $i$ and $j$ from them and for those queries containing both $i$ and $j$, we add a new ball $s \not\in [n]$ instead. The resulting $(k-1)$-uniform hypergraph $\cF'$ has less than $\mathsf{d}(k-1,n-1)$ edges, thus it has Property C. This gives a coloring that is balanced on every edge of $\mathcal{F}'$. Let red be the color of $s$. This coloring can be extended to all the balls except for $i$ and $j$ such a way that there are $(n-1)/2$ blue and $(n-3)/2$ red balls among the balls in $[n]\setminus \{i,j\}$. Then the answers to queries containing neither $i$ nor $j$ are according to this coloring. Moreover, a query $Q$ containing exactly one of them can also be answered according to this coloring without knowing the colors of $i$ and $j$, as $Q\setminus \{i,j\}$ is balanced. Finally the answer to queries containing both $i$ and $j$ is the number $(k-1)/2$. It is easy to see that if at least one of $i$ and $j$ is red, the answers are consistent with the coloring. Thus any color can be minority (i.e. not majority), hence a ball different from $i$ and $j$ cannot be the majority ball, as we know its color. But $i$ can be red and $j$ blue, or the other way around. In that case the red ball is in minority, thus we cannot say that $i$ or $j$ is a majority ball, finishing the proof.
\end{proof}

The above theorem can be improved with similar, but more involved arguments, as we show below. For simplicity, we only deal with the case when $k$ and $n$ are both even.

\begin{thm}\label{new} If $k$ is even and large enough, and $n$ is even, then we have $$N(CM,k,n)\ge \frac{n(5ik-k+i+i^2)-2(k-i)}{(2k+3+i)ik}$$ for every $k/2<i<k$. 
In particular, for every $\varepsilon$, there is a $k_0$ such that if $k>k_0$ is even, then there is an $n_0(k)$, such that if $n>n_0(k)$ is even, then $N(CM,k,n)>(11/5-\varepsilon)n/k$.
\end{thm}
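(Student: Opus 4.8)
The plan is to refine the degree-counting argument from the proof of Theorem \ref{ncml} by exploiting not just single low-degree vertices but small clusters of vertices whose queries interact in a restricted way. Recall that in the even–even case of Theorem \ref{ncml} we showed every vertex has degree at least $\mathsf{d}(k-1,n-1)$, but for large $k$ one has $\mathsf{d}(k-1,n-1)$ essentially constant, which is far weaker than what we want; so the degree bound must come from somewhere else. The idea is instead to argue that a vertex of small degree can only be ``saved'' (i.e.\ made informative) if enough of its incident queries are \emph{unbalanced by a large margin} on the remaining $k-1$ vertices, and that such a large imbalance forces structural overlap among those queries, which in turn forces a \emph{global} population imbalance that the adversary can exploit. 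Fixing the parameter $i$ with $k/2<i<k$ plays the role of the threshold ``at least $i$ balls of one color inside the query'': the point of the statement is that a query which is always answered with the extreme values $\{i,k-i\}$ or more extreme forces $k-1$ of its balls toward one color, and if a vertex $v$ has few incident queries, all of them of this extreme type, then the union of their link hypergraphs is small, so the adversary can $\mathsf{d}(k-1,\cdot)$-colour it and still keep control of the total count — the contradiction being that to block this the queries must be numerous or highly overlapping.

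Concretely I would proceed as follows. First, set up the adversary: call a vertex $v$ \emph{light} if $\deg(v)$ is below a threshold $t$ to be optimised (it will come out near $\frac{n(\dots)}{(\dots)k}$-type expression divided by $n$, i.e.\ a constant times $n/k$), and suppose for contradiction that many vertices are light. Second, show a local dichotomy: for a light vertex $v$, either the adversary can choose a balanced colouring on $\bigcup\{Q:v\in Q\}\setminus\{v\}$ — exactly as in Theorem \ref{ncml}, using that this link hypergraph has few edges hence Property C — in which case $v$ is uninformative and we get the same contradiction as before; or every such balanced colouring is blocked, which (by a short counting/pigeonhole argument on the answers) forces at least one incident query $Q$ to satisfy: in \emph{every} adversary-consistent colouring, $Q\setminus\{v\}$ contains at least $i-1$ balls of one fixed colour. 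Third — and this is the combinatorial heart — translate ``many queries are $i$-extreme'' into a global imbalance: each such query contributes a surplus of roughly $2i-k$ toward one colour on $k$ vertices, and by an averaging/double-counting argument over the $q$ queries and $n$ vertices (weighting extreme queries, exactly as the sum-of-degrees computation $kq=\sum\deg$ was used before) one derives that the total number of blue balls must exceed $n/2$ by a margin that the adversary cannot simultaneously keep at $0$ or $1$ for all the light vertices. Balancing the two sources of constraint — the number of light vertices needed for the first horn versus the number of extreme queries needed for the second — against $kq=\sum_v \deg(v)\ge t\cdot(\#\text{heavy})+1\cdot(\#\text{light})$ and optimising over the free slack in $i$ yields the stated inequality $N(CM,k,n)=q\ge \frac{n(5ik-k+i+i^2)-2(k-i)}{(2k+3+i)ik}$.

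For the ``in particular'' clause, once the family of bounds indexed by $i$ is in hand, I would simply optimise the continuous relaxation: write $i=\alpha k$ with $1/2<\alpha<1$, let $k\to\infty$, and note the bound tends to $\frac{5\alpha+\alpha^2}{(2+\alpha)\alpha}\cdot\frac{n}{k}=\frac{5+\alpha}{2+\alpha}\cdot\frac{n}{k}$. The function $\alpha\mapsto\frac{5+\alpha}{2+\alpha}$ is decreasing on $(1/2,1)$, so its supremum is approached as $\alpha\downarrow 1/2$, giving $\frac{5.5}{2.5}=\frac{11}{5}$; hence for any $\varepsilon>0$ one picks $\alpha$ close enough to $1/2$, then $k_0$ large enough that the error terms $-k$, $+i$, $+i^2$, $-2(k-i)$ and the constant $3$ in the denominator are absorbed within $\varepsilon n/k$, and then $n_0(k)$ large enough for the ``large enough'' hypotheses of the main bound (and of Theorem \ref{ncml}, on which the light/heavy dichotomy rests) to apply. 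The main obstacle I anticipate is the third step: making precise how much overlap a collection of $i$-extreme queries through a common low-degree vertex is forced to have, and converting that overlap cleanly into a global count without losing constant factors — this is presumably where the ``more involved arguments'' alluded to before the theorem are spent, and where the somewhat unusual denominator $(2k+3+i)ik$ originates.
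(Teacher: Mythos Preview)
Your proposal misidentifies what the parameter $i$ is doing, and as a result the entire main argument is off track. In the paper's proof, $i$ is a \emph{structural} threshold on the query hypergraph $\cQ$: a query belongs to the auxiliary family $\cF$ if it contains at least $i+1$ vertices of degree $\le 2$ in $\cQ$. It has nothing to do with answer values or with ``at least $i$ balls of one colour inside the query''. Your dichotomy between ``balanced link hypergraph'' and ``$i$-extreme queries forcing a global colour surplus'' is not the mechanism that produces the stated bound, and I do not see how to make your third step yield the specific numerator $n(5ik-k+i+i^2)-2(k-i)$ and denominator $(2k+3+i)ik$; those constants come from a very particular degree-partition calculation, not from a colour-imbalance argument.

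What the paper actually does is a three-level degree count. Vertices are split into degree $1$, degree $2$, and degree $\ge 3$, with counts $a,b,c$. One already knows $a\le|\cQ|$ (at most one degree-$1$ vertex per query, from Theorem~\ref{ncml}). The new work is to control $D'$, the total degree contributed by degree-$\le 2$ vertices. Queries outside $\cF$ each have at most $i$ such vertices, hence at least $k-i$ vertices of degree $\ge 3$, so their contribution to $D'$ is at most $i/(k-i)$ times their contribution to $D''$. For queries in $\cF$, one deletes the high-degree vertices to get a (non-uniform) hypergraph $\cF'$ and proves a structural claim: $\cF'$ is linear and contains at most three linear cycles, each necessarily covering more than $n/2+3$ vertices --- this is shown by adversary colourings that make two low-degree vertices in a shared pair, or the ``joint'' vertices $v_1,\dots,v_\ell$ of a short linear cycle, indistinguishable. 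Hence after deleting two edges $\cF'$ is linear-cycle-free, which bounds its total size by $(i+1)n/i$ (each new edge of size $\ge i+1$ adds at least $i$ new vertices). Plugging $D'\le i|\cQ|+2(k-i)+(i+1)(k-i)n/(ik)$ into $6n\le 3|\cQ|+3D'+2D''=(2k+3)|\cQ|+D'$ gives exactly the claimed inequality. None of this linear-cycle analysis appears in your sketch, and it is the combinatorial heart of the argument.

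Your treatment of the ``in particular'' clause is fine: setting $i=\alpha k$ and letting $k\to\infty$ gives $(5+\alpha)/(2+\alpha)\cdot n/k$, which tends to $11/5$ as $\alpha\downarrow 1/2$; the paper simply takes $i=k/2+1$.
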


We remark that for any specific $k$, one can easily obtain from the first part of the above theorem a lower bound for $N(CM,k,n)$ with a simple calculation. 
For example, if $k=8$, then $i=5$ gives the best lower bound, and it is slightly larger than the lower bound $2n/9$ from Theorem \ref{ncml}.

\begin{proof}
Let $\cF$ be the subhypergraph of the query hypergraph $\cQ$ having as edges those queries that contain at least $i+1$ vertices of degree at most 2. 
Let $\cF'$ be the multi-hypergraph obtained from $\cF$ by deleting the vertices of degree more than 2 from each edge. Note that $\cF'$ may be non-uniform.

\begin{clm} The total size of $\cF'$, i.e. the sum of the edge sizes is at most $2k+(i+1)n/i$.

\end{clm}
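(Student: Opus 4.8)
The plan is to estimate $S:=\sum_{e\in\cF'}|e|$ by double counting, using that every vertex occurring in $\cF'$ has $\cQ$-degree at most $2$, hence also $\cF'$-degree at most $2$. Write $D$ for the set of such low-degree vertices. By definition each edge of $\cF'$ is a subset of $D$ of size at least $i+1$, and by the two structural facts established in the proof of Theorem~\ref{ncml} for $k,n$ even — $\cQ$ has no vertex of degree $0$, and every query has at most one vertex of degree $1$ — each edge of $\cF'$ in fact contains at least $i$ vertices of degree exactly $2$. Letting $a$ be the number of vertices of $\cF'$-degree $2$ (equivalently, low-degree vertices both of whose $\cQ$-edges lie in $\cF$) and $b$ the number of vertices of $\cF'$-degree $1$, we have $S=2a+b$ and $a+b\le|D|\le n$, so $S\le a+n$; thus it suffices to prove $a\le 2k+n/i$. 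Writing $f$ for the number of edges of $\cF'$, the size bound gives $(i+1)f\le S$.

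Next I would encode the overlap pattern of $\cF'$ in the multigraph $H$ on vertex set $\cF'$, joining the two edges of $\cF'$ through each $\cF'$-degree-$2$ vertex, so that $|E(H)|=a$ and $\deg_H(e)\ge i$ for each node $e$. If $H$ has $c$ components and cyclomatic number $\mu$, then $a=f-c+\mu\le f-1+\mu$; combining this with $(i+1)f\le S=2a+b\le a+n\le f-1+\mu+n$ gives $fi\le\mu+n$, hence $a\le f-1+\mu\le(\mu+n)/i+\mu$ and finally $S\le a+n\le(1+1/i)(n+\mu)$. So the Claim follows once $\mu$ is bounded by (slightly less than) $2k$; the additive $2k$ is just the slack absorbed here. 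In other words, everything reduces to showing that $\cF'$ contains only $O(k)$ "redundant" overlaps: a parallel pair of $H$-edges is a pair of edges of $\cF'$ sharing two vertices that are trapped between them, and a cycle of $H$ is a short cyclic arrangement of edges of $\cF'$.

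Bounding $\mu$ is the heart of the matter and the step I expect to be the main obstacle. A redundant configuration is a sub-hypergraph of $\cF'$ every vertex of which is trapped inside it (it lies in exactly the two queries already present in the configuration); being of maximum degree $2$ and not acyclic, it would — if it had too many edges — admit a balanced colouring, i.e. have Property~C, and then the adversary could recolour precisely those balls without changing any query answer, so that no majority ball is determined, contradicting validity of $\cQ$. Quantitatively one compares the number of edges of the configuration with $\mathsf{d}(k-1,\cdot)$, exactly in the spirit of the link-hypergraph arguments in Theorems~\ref{ncmu} and~\ref{ncml}, and this is where the hypothesis that $k$ is even and large enough enters. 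The delicate point — as in the odd-$n$ case of Theorem~\ref{ncml} — is that obscuring the balls of the redundant configuration is not by itself enough for the adversary: one must simultaneously choose a global colouring of the remaining $n-O(1)$ balls so that no other, colour-determined ball can be reported either, and checking that such a coupling always exists is where the real work lies.
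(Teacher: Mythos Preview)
Your reduction to bounding the cyclomatic number $\mu$ of the auxiliary multigraph $H$ is sound, and your inequality $S\le (i+1)(n+\mu)/i$ is exactly what the paper's final counting paragraph yields once one knows that two edges of $\cF'$ can be deleted to leave an acyclic linear hypergraph. One small slip: the assertion $\deg_H(e)\ge i$ is not justified --- you only know each edge of $\cF'$ has at least $i$ vertices of $\cQ$-degree~$2$, not of $\cF'$-degree~$2$ --- but you never use it, so this is harmless.

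The genuine gap is the bound on $\mu$. Your plan --- locate a ``redundant configuration'', observe that with many edges it has Property~C, and let the adversary recolour inside it --- has the logic inverted (more edges make Property~C \emph{harder}, not easier), and it is unclear how a comparison with $\mathsf{d}(k-1,\cdot)$ could produce a bound depending only on $k$ rather than on $n$. The paper does something entirely different and never invokes Property~C here. First, a short adversary argument (two balls $x,y$ of degree~$\le 2$ shared by two queries $Q_1,Q_2$) shows $\cF'$ is \emph{linear}, so $H$ is simple. Second, a longer adversary argument shows that every linear cycle in $\cF'$ covers more than $n/2+3$ vertices: along a cycle $h_1,\dots,h_\ell$ with joint vertices $v_1,\dots,v_\ell$, one colours each $Q_j$ so that it has exactly $(k-2)/2$ balls of each colour outside $\{v_1,\dots,v_\ell\}$, which traps the joints as a monochromatic set of unknown colour. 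Third --- and this is the idea your proposal is missing --- a purely structural step: two linear cycles each covering more than $n/2$ vertices must overlap in a single linear subpath, so all linear cycles of $\cF'$ are assembled from at most three internally disjoint paths between two fixed vertices, and a putative fourth cycle forces the total vertex count above $n$. Hence there are at most three linear cycles; deleting two edges of $\cF'$ (each of size at most $k$, giving the $2k$ in the Claim) destroys them all. Translated into your language this yields $\mu\le 2k$, but via the geometry of long cycles on $n$ vertices rather than via discrepancy.
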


\begin{proof}[Proof of Claim] First we show that $\cF'$ is a linear hypergraph. Indeed, otherwise there are two queries $Q_1\in \cQ$ and $Q_2\in\cQ$ sharing two balls $x$ and $y$ that do not appear in any other query. Then one can color the other balls of $Q_1$ and $Q_2$ such a way that both $Q_1$ and $Q_2$ have $(k-2)/2$ red and $(k-2)/2$ blue balls besides $x$ and $y$. Furthermore, one can color the other balls such that there are $n/2$ blue and $(n-4)/2$ red balls besides $x$ and $y$. Then the answer to every query is according to this coloring, and the answer to $Q_1$ and $Q_2$ is the number $(k-2)/2$. Then we know $x$ and $y$ have the same color, but we do not know if it is blue or red, thus we do not know if there is a majority color or not.

Next we show that $\cF'$ does not contain any linear cycle covering at most $n/2+3$ vertices. A linear cycle of length $\ell$ consists of $\ell$ edges $h_1,\dots, h_\ell$ such that the only intersections among those are the singleton intersections $v_j$ of $h_j$ and $h_{j+1}$  for every $j$, modulo $\ell$ (i.e. we also have the singleton intersection $v_\ell$ of $h_\ell$ and $h_1$). Let $V=\{v_1,\dots,v_\ell\}$.

Assume that there is a linear cycle of length $\ell$ covering $m\le n/2+3$ vertices, and for every $j$, let $Q_j$ denote the query that $h_j$ was obtained from. 
We define a coloring of all the balls not in $V$ such that this coloring already determines the answers to every query in $Q$, but does not determine whether there is a majority ball (hence it leads to a contradiction).

We first color the vertices that are in some $Q_j$ but not in any $h_{j'}$ with $j\neq j'$, to red. Then we go through the edges $h_j$ in an arbitrary order, and color the vertices in $h_j\setminus V$, such a way that every $Q_j$ contains $(k-2)/2$ blue balls and $(k-2)/2$ red balls not in $V$. This is doable, as only the vertices of $Q_j\setminus h_j$, thus at most $k-(i+1)\le (k-2)/2$ balls of $h_i$ had been colored (to red) when we arrived to $h_i$. Observe that we colored to blue only the vertices covered by the linear cycle, except the vertices $v_1,\dots,v_\ell$ and $\ell\ge 3$. Therefore, we have colored $m-\ell\le n/2$ vertices to blue so far. We claim that so far the number of blue balls is at least the number of red balls. Indeed, for each $j\le \ell$, we colored $(k-2)/2$ vertices in $h_j\setminus V$ to blue. As these sets are vertex-disjoint, there are $\ell(k-2)/2$ blue balls. Similarly, for each $j\le \ell$, we colored $(k-2)/2$ vertices in $Q_j$ to red, thus there are at most $\ell(k-2)/2$ red balls.

Now we can color the remaining balls (besides those in $V$) so that there are $n/2$ blue and $n/2-\ell$ red balls. Then we can give an answer to every $Q_j$ as the number $(k-2)/2$, and answer every other query according to this coloring. Then we know that every ball in $V$ has the same color, but we do not know if it is blue or red, thus we do not know if there is a majority color or not.

This implies that every linear cycle in $\cF'$ has more than $n/2+3$ vertices. Next we show that $\cF'$ contains at most three linear cycles.
Let $\cL$ and $\cL'$ be two linear cycles in $\cF'$ with edges $h_1,\dots,h_\ell$ and $h'_1,\dots,h'_{\ell'}$
Then by the pigeonhole principle they share at least two vertices.
We claim that $\cL$ and $\cL'$ have to share a linear subpath and nothing more (a linear path is a linear cycle without one of the edges, or a single edge, or a single vertex). Indeed, otherwise they share two subpaths, thus following $\cL'$, one leaves $\cL$ and returns to it at least twice. It is easy to see that this way we can find two vertex disjoint linear cycles, a contradiction.

Therefore, for some vertices $x$ and $y$ there are three linear paths $\cP_1,\cP_2,\cP_3$ between $x$ and $y$ that share only these two vertices. These paths define three linear cycles. We claim that these are all the linear cycles in $\cF'$.

A fourth cycle intersects each of our three cycles multiple times, in particular it intersects two of the paths, say $\cP_1$ and $\cP_2$. It has a linear subpath $\cP_4$ between vertices $u$ in $\cP_1$ and $v$ in $\cP_2$ such that $\cP_4$ shares only $u$ and $v$ with our three cycles. Let $l_j$ denote the number of vertices in $\cP_j$, then $l_4+(l_1+l_2)/2>n/2+3$, because $\cP_4$ forms a cycle with any of the two paths between $u$ and $v$ in $\cP_1\cup\cP_2$. On the other hand, $l_1+l_3>n/2+3$ and $l_2+l_3>n/2+3$, thus $(l_1+l_2)/2+l_3>n/2+3$. Adding up the above inequalities, we obtain $l_1+l_2+l_3+l_4>n+6$, a contradiction.

Now we can delete two edges of $\cF'$ to obtain a linear hypergraph $\cF''$ without any linear cycles. We show that this cycle-free property implies that sum of the degrees of $\cF''$ is at most $(i+1)n/i$ (which completes the proof of the claim). Indeed, we can build $\cF''$ the following way. We start with an arbitrary edge, and build a connected component by adding a new edge sharing one vertex with the component. If a component is finished, we start again with a disjoint edge and repeat this. This way every new edge adds $j+1$ to the sum of the degrees for some $j\ge i$, but also increases by $j$ the number of the vertices used. As $(j+1)/j\le(i+1)/i$, we will get the largest total size if we add an edge of size $i+1$ all the time, that is, at most $n/i$ times.
\end{proof}
We also use that any query in $\cQ$ has at most one vertex of degree one, which we obtained at the beginning of the proof of Theorem \ref{ncml}.

Let us count the total sum $D$ of the degrees in $\cQ$. On the one hand, $D$ is obviously $k|\cQ|$, because every query adds $k$ to the sum. But look at this more precisely. 
 Let $D'$ be total sum of the degrees of balls with degree 1 or 2, and $D''$ be total sum of the degrees of balls with degree greater than 2, thus we have $k|\cQ|=D=D'+D''$. Let us look at $D'+D''$ in a different way. For a query $Q$, let us denote by $q'(Q)$ the number balls in $Q$ with degree 1 or 2, and by $q''(Q)$ the number of balls in $Q$ with degree greater than 2. We say that $Q$ adds $q'(Q)$ to $D'$ and $q''(Q)$ to $D''$, since $D'=\sum_{Q\in\cQ}q'(Q)$ and $D'=\sum_{Q\in\cQ}q''(Q)$. 
 
If a query is not in $\cF$, it adds to $D''$ at least $(k-i)/i$ times as much as it adds to $D'$. The queries in $\cF$ altogether add at most $2k+(i+1)n/i$ to $D'$.
These imply that $D'\le iD''/(k-i)+2k+(i+1)n/i=i(k|Q|-D')/(k-i)+2k+(i+1)n/i$, thus $D'\le i|\cQ|+2(k-i)+(i+1)(k-i)n/ik$.

Let $a$ be the number of balls of degree 1, $b$ be the number of balls of degree 2 and $c$ be the number of balls of degree greater than 2. Then $a\le |\cQ|$, $n=a+b+c$, $D'=a+2b$ and $D''\ge 3c$. Thus $n\le |\cQ|/2+D'/2+D''/3$, i.e. $6n\le 3|\cQ|+3D'+2D''=(2k+3)|\cQ|+D'\le (2k+i+3)|\cQ|+2(k-i)+(i+1)(k-i)n/ik$. This implies $|\cQ|\ge \frac{n(5ik-k+i+i^2)-2(k-i)}{(2k+3+i)ik}$. By choosing $i$ to be $k/2+1$, it is easy to see that this lower bound divided by $n/k$ converges to $11/5$, which finishes the proof.
\end{proof}

\section{General model and Borzyszkowski's model}

\begin{thm}\label{nyesnou} For $2 \le k$ and $2k-1 \le n$, we have $$N(BM,k,n) \le N(GM,k,n)\le (n-k+1)\times\mathsf{m}(k-1,n-1).$$ 

\end{thm}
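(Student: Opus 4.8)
\emph{Plan.} The inequality $N(BM,k,n)\le N(GM,k,n)$ is one of the basic inequalities already noted (a Borzyszkowski‑model answer determines the general‑model answer on the same query), so I would only prove the bound on $N(GM,k,n)$, by exhibiting a good non‑adaptive query set. Fix a $(k-1)$‑uniform hypergraph $\cH$ on the vertex set $[n-1]$ that does not have Property B and has exactly $\mathsf{m}(k-1,n-1)$ edges (legitimate since $n-1\ge 2(k-1)-1$), and let $\cQ$ be the family of all $k$‑sets $e\cup\{x\}$ with $e\in\cH$ and $x\in[n]\setminus e$. Each such set has size $k$, and for a fixed $e$ there are $n-k+1$ choices of $x$, so $|\cQ|\le (n-k+1)\mathsf{m}(k-1,n-1)$; it remains to show that the answers to $\cQ$ always determine a majority ball or its non‑existence.

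The key observation is that a query $e\cup\{x\}$ is answered NO exactly when $e$ is monochromatic and $x$ shares its colour. The first step is to note that, since $\cH$ fails Property B, the true colouring restricted to $[n-1]$ makes some edge of $\cH$ monochromatic. I would then split into two cases. If some query $Q$ is answered NO, then $Q$ is monochromatic, hence so is any edge $e_0\in\cH$ with $e_0\subseteq Q$ (such an $e_0$ exists by the way $Q$ was put into $\cQ$); now for every $y\in[n]\setminus e_0$ the set $e_0\cup\{y\}$ is a query, and its answer reveals whether $y$ has the colour of $e_0$. This pins down the partition of $[n]$ into its two colour classes up to swapping the two colours, which suffices to name a majority ball or to declare that none exists. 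If instead every query is answered YES, pick a monochromatic edge $e_0\in\cH$, say of colour red: since each $e_0\cup\{y\}$ with $y\notin e_0$ is answered YES, every ball outside $e_0$ is non‑red, so the red class is exactly $e_0$ (size $k-1$) and the other class is $[n]\setminus e_0$ (size $n-k+1\ge k$, using $n\ge 2k-1$); thus a strict majority exists, it is the complement of $e_0$, and since $e_0\subseteq[n-1]$ the ball $n$ lies in it, so I would simply output ball $n$.

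The step I expect to be the main obstacle is the all‑YES case: at that point one cannot identify which edge of $\cH$ is the (unknown) minority class, so one cannot name a specific small colour class directly. The way around it — and the reason the bound is phrased with $\mathsf{m}(k-1,n-1)$ rather than the smaller $\mathsf{m}(k-1,n)$ — is to lay $\cH$ down on only $n-1$ of the $n$ balls, so that whichever edge turns out to be the minority class, the omitted ball $n$ is guaranteed to be a majority ball and no identification is needed. Everything else (the query count, and the routine observation that a known partition into colour classes lets one solve the majority problem) is straightforward.
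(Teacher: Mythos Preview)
Your proof is correct and follows essentially the same route as the paper's: the same $(k-1)$-uniform hypergraph $\cH$ on $[n-1]$ without Property B, the same query family of all $k$-supersets of its edges, and the same two-case analysis (a NO answer lets you read off the full partition via a monochromatic edge; all YES forces the minority class to equal some edge of $\cH\subseteq[n-1]$, so ball $n$ is safely majority). Your write-up is in fact a bit more explicit about why the extra vertex is essential in the all-YES case, but the argument is the paper's.
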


\begin{proof}

We consider a $(k-1)$-uniform hypergraph $\mathcal{F}$ of size $\mathsf{m}(k-1,n-1)$ on $n-1$ vertices that does not have property B. Then we add an additional $n$th vertex. We query each set of size $k$ that contains an edge of $\mathcal{F}$, thus at most $(n-k+1)\mathsf{m}(k-1,n-1)$ sets. If we get a NO answer, we can identify a monochromatic member $F$ of $\mathcal{F}$, which is queried together with every other ball, thus we find out if the other balls have the same color as balls in $F$ or not, i.e. we can identify the color classes. Otherwise, as we know that there is a monochromatic member $F$ of $\mathcal{F}$, every ball in $[n] \setminus F$ is the other color, thus majority ball, because of $2k-1 \le n$. In particular the ball not in members of $\cF$ is a majority ball. 
\end{proof}

\begin{thm}\label{nyesnol}

Let $k \ge 3$ and $n$ be large enough.

$\bullet$ If $n$ is even, then we have 
$f(n,k)\times\mathsf{m}(k-1) \le N(BM,k,n) \le N(GM,k,n)$.

$\bullet$ If $n$ is odd, then we have
$f(n,k)\times\mathsf{m}(k-1) \le N(GM,k,n)$ and

\hspace{4.5cm} $f(n,k)\times \mathsf{m}(k-2) \le N(BM,k,n)$,  where 

\begin{displaymath}
f(n,k):=
\left\{ \begin{array}{l l}
\frac{n}{k} & \textrm{if $n$ is even},\\
\frac{n-1}{2k} & \textrm{if $n$ is odd.}\\
\end{array}
\right.
\end{displaymath}

\end{thm}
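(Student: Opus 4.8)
The plan is to prove the lower bounds by exhibiting, for a query hypergraph $\cQ$ with too few edges, an adversary coloring under which no majority ball can be pointed out. The parameter $f(n,k)$ should come from a degree argument exactly as in the proof of Theorem~\ref{ncml}: the key claim will be that in any valid strategy almost every ball must have large degree in $\cQ$, and summing degrees against $k|\cQ|$ gives the bound. So the first step is to identify the correct degree threshold in each model.

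For the \texttt{GM} bound with $n$ even, I would argue that every ball $i$ must have degree at least $\mathsf{m}(k-1)$. Indeed, consider the link hypergraph of $i$, namely $\{Q\setminus\{i\}: i\in Q\}$, a $(k-1)$-uniform hypergraph on the other balls. If it has fewer than $\mathsf{m}(k-1)$ edges, it has Property B, so we can $2$-color $\bigcup\{Q:i\in Q\}\setminus\{i\}$ with no monochromatic $(k-1)$-set; every query through $i$ then already contains two differently-colored balls, so its answer is YES regardless of $i$'s color. Since $n$ is large and even, extend this to a balanced coloring of all balls other than $i$ (and make sure the queries not containing $i$ also get honest YES answers, which is automatic once they are non-monochromatic — here one has to be a little careful to also handle queries that might be monochromatic, but with $n$ large there is enough room to avoid that on the finitely many relevant edges, or one includes those in the Property~B argument too). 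Then whether there is a majority color depends only on the color of $i$, which is not determined, so no majority ball can be exhibited. Hence every vertex has degree $\ge \mathsf{m}(k-1)$, giving $k|\cQ|\ge n\,\mathsf{m}(k-1)$, i.e.\ $|\cQ|\ge (n/k)\,\mathsf{m}(k-1)=f(n,k)\,\mathsf{m}(k-1)$. Since $N(BM,k,n)\le N(GM,k,n)$ always, the same bound holds for \texttt{BM}.

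For $n$ odd the argument is the two-ball version, mirroring the odd case of Theorem~\ref{ncml}: I would show all but one ball have degree at least $\mathsf{m}(k-1)/2$ (for \texttt{GM}), resp.\ $\mathsf{m}(k-2)/2$ (for \texttt{BM}). If two balls $i\neq j$ each fail this, take the $\le \mathsf{m}(k-1)$ queries meeting $\{i,j\}$, delete $i$ and $j$ from them, and identify the two deleted balls by a fresh vertex $s$ in those queries that contained both; the resulting hypergraph is $(k-1)$-uniform with fewer than $\mathsf{m}(k-1)$ edges, hence has Property~B, yielding a coloring with no monochromatic edge. This coloring answers YES to every query meeting $\{i,j\}$ no matter how $i,j$ are colored (a query containing exactly one of them still has a differently-colored pair among the remaining $k-1$ balls), and is extended to a near-balanced coloring of the rest; then swapping the colors of $i$ and $j$ changes which color is the majority, so neither $i$ nor $j$ — nor any other ball, whose color is fixed — can be named a majority ball. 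The reason the \texttt{BM} odd case needs $\mathsf{m}(k-2)$ rather than $\mathsf{m}(k-1)$ is that in \texttt{BM} a YES answer must come with an actual exhibited bichromatic pair, so when a query meets $\{i,j\}$ in exactly one ball we can only rely on the other $k-2$ balls carrying a guaranteed bichromatic pair independent of $i,j$; thus one needs the link (after deletion) of each low-degree vertex restricted away from the other bad vertex to have no monochromatic $(k-2)$-set, forcing the threshold $\mathsf{m}(k-2)/2$.

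I expect the main obstacle to be bookkeeping rather than a genuinely new idea: one must check that the adversary's partial coloring really does fix all query answers while leaving the majority question open, and in particular that on the finitely many queries not incident to the distinguished ball(s) nothing forces a NO (monochromatic) answer that would leak information — this is where "$n$ large enough" is used, to reserve room for a balanced, appropriately non-monochromatic extension. Once the degree conditions are established, the counting step $k|\cQ|=\sum_v \deg(v)$ is immediate, and the division by $k$ (resp.\ $2k$ in the odd case, because we only control $n-1$ of the degrees and each is only $\ge \mathsf{m}(\cdot)/2$) produces exactly $f(n,k)$.
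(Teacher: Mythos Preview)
Your overall plan matches the paper's proof closely: establish degree thresholds via an adversary link-coloring that makes the distinguished ball(s) irrelevant, then sum degrees against $k|\cQ|$. There is, however, one genuine logical slip.

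For $n$ even you conclude with ``Since $N(BM,k,n)\le N(GM,k,n)$ always, the same bound holds for \texttt{BM}.'' That inequality points the wrong way: a lower bound on $N(GM,k,n)$ does \emph{not} descend to $N(BM,k,n)$; lower bounds transfer from BM up to GM, not conversely. The paper handles the BM even case by observing that the \emph{same} adversary suffices: in every query $Q\ni i$ the set $Q\setminus\{i\}$ is bichromatic by the Property~B coloring of the link, so the adversary can legally answer YES and exhibit a pair avoiding $i$. You clearly understand this mechanism, since you invoke exactly this consideration in the odd case, so the fix is immediate --- but as written the even BM bound is unsupported.

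Two smaller remarks. First, your concern about queries not containing $i$ being monochromatic is unnecessary: the adversary simply answers those honestly according to the fixed coloring of $[n]\setminus\{i\}$ (NO if monochromatic, YES otherwise). A NO answer there is harmless, since it carries no information about $i$; what matters is only that no answer depends on $i$'s color. Second, in your BM-odd explanation, a query meeting $\{i,j\}$ in \emph{exactly one} ball has $k-1$ remaining balls, not $k-2$. The reason for dropping to $\mathsf{m}(k-2)$ is the queries containing \emph{both} $i$ and $j$: there only $k-2$ real balls remain, and in BM one must exhibit a bichromatic pair among them (the auxiliary vertex $s$ used in the GM argument is not a real ball one can point to, which is precisely why the GM trick fails for BM here). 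The paper then trims the single-hit queries by one further arbitrary ball purely to keep the auxiliary hypergraph $(k-2)$-uniform.
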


\begin{proof}
Let us start with the General Model for even $n$. We prove that every ball $i$ has degree at least $\mathsf{m}(k-1)$. Indeed, otherwise it is possible that the set $\bigcup \{Q : i \in Q\} \setminus \{i\}$ is colored by blue and red in such a way, that no edge in the open neighborhood hypergraph of $i$ is monochromatic. Thus one can answer such a way that the color of $i$ does not change any answers. As $n$ is large enough, these less than $\mathsf{m}(k-1)$ queries contain less than $n/2$ balls, hence we can color the remaining balls such a way that $n/2$ of them are red and $n/2-1$ of them are blue. As $i$ can be of any color, we do not know if there is a majority ball or not.

If $n$ is odd, we will use a similar argument to the proof of Theorem \ref{ncml} to show that all but one balls have degree at least $\mathsf{m}(k-1)/2$. Indeed, otherwise there are less than $\mathsf{m}(k-1)$ queries containing either $i$ or $j$. Let $\cF\subset \cQ$ be the hypergraph having those queries as edges. Let us remove $i$ and $j$ from them, and for those queries containing both we add a new ball $s\not\in [n]$. The resulting $(k-1)$-uniform hypergraph $\cF'$ has less than $\mathsf{m}(k-1)$ edges, thus it has Property B. This gives a coloring on a subset of $[n]\setminus \{i,j\} \cup \{s\}$ without a monochromatic edge, let red be the color of $s$. So far less than $\mathsf{m}(k-1)(k-1)$ balls have colors, thus we can extend this coloring to all the balls except for $i$ and $j$ such a way that there are $(n-1)/2$ blue and $(n-3)/2$ red balls among the balls in $[n] \setminus \{i,j\}$. Then we answer according to this coloring, and in particular we answer YES to queries containing one of $i$ and $j$ (as both colors appear among the balls in those queries). We also answer YES to the queries containing both $i$ and $j$. We know that those queries contain a blue ball different from $i$ and $j$. Therefore, it is easy to see that if one of $i$ and $j$ is red, the answers are consistent with the coloring. Thus any color can be minority, hence another ball cannot be the majority ball, as we know its color. But $i$ can be red and $j$ blue, or the other way around. In that case the red ball is in minority, thus we cannot claim $i$ or $j$ is a majority ball, finishing the proof for the General model.

In Borzyszkowski's Model, if $n$ is even, the same proof works, as we can point out a pair of balls not equal to $i$ in every query. If $n$ is odd, the same proof does not work, as it is possible that only $s$ is of color red in a query. In that case either $i$ or $j$ has to be put in the pair that is pointed out, thus we might find out its color. However, we can show that all but one of the balls have to be contained in at least $\mathsf{m}(k-2)/2$ queries. Indeed, otherwise there are less than $\mathsf{m}(k-2)$ queries containing either $i$ or $j$. Let $\cG$ be the hypergraph that has those queries as edges, and remove $i$ and $j$ from them, and for those queries containing only one of $i$ and $j$, we remove another arbitrary ball. The resulting $(k-2)$-uniform hypergraph $\cG'$ has less than $\mathsf{m}(k-2)$ edges, thus it has Property B. From here it works as in the General model: We find a coloring without a monochromatic edge, then extend this coloring to all the balls but $i$ and $j$, such a way that there are  $(n-1)/2$ blue and $(n-3)/2$ red balls. Then answer according to this coloring, in particular we can answer YES and point out a pair of two balls of different colors without using $i$ and $j$. By the same reasoning as in the case of the General model, any ball can be a minority ball, finishing the proof.
\end{proof}

\section{Acknowledgement}

We thank the reviewer for the careful review, which helped to improve the presentation of the manuscript a lot.

Research was supported by the National Research, Development and Innovation Office -
NKFIH under the grants FK 132060, KH130371, KKP-133819 and SNN 129364. Research
of Vizer was supported by the J\'anos Bolyai Research Fellowship and by the New National
Excellence Program under the grant number UNKP-20-5-BME-45.

\end{document}